\title{Invariant metrizability and projective metrizability 
\\
on Lie groups and homogeneous spaces}
\author{Ioan Bucataru, Tam\'as Milkovszki and Zolt\'an Muzsnay} 
\def\blue#1{\textcolor[rgb]{0.0,0.0,1.0}{#1}}
\let\oldmarginpar\marginpar
\renewcommand\marginpar[1]{\oldmarginpar[\raggedleft\footnotesize #1]%
  {\blue{\raggedright \footnotesize \fbox{
      \begin{minipage}{1.0\linewidth}
        #1
      \end{minipage}
}}}}
\def\Ad{\rm Ad}
\def\Ad{\rm Ad}
\def\h{\mathfrak{h}}
\def\l{\ell}
\def\R{\mathbb R}
\def\p#1{\frac{\partial}{\partial #1}}
\def\pp#1#2{\frac{\partial#1}{\partial #2}}
\def\g{\mathfrak{g}}
\def\X#1{\mathfrak{X}(#1)}
\numberwithin{equation}{section} %% Comment out for sequentially-numbered
\numberwithin{figure}{section} %% Comment out for sequentially-numbered
\theoremstyle{plain}
\newtheorem{theorem}{Theorem}[section] 
\newtheorem{proposition}[theorem]{Proposition}
\newtheorem{lemma}[theorem]{Lemma}
\newtheorem{corollary}[theorem]{Corollary}
\newtheorem{definition}[theorem]{Definition}
\theoremstyle{definition}
\newtheorem{remark_num}[theorem]{Remark}
\theoremstyle{remark}
\newtheorem*{remark}{Remark}
\newtheorem*{acknowledgement*}{Acknowledgement}
\date{}
\newenvironment{packed_enumerate}{
  \begin{enumerate}[topsep=2pt, partopsep=0pt,leftmargin=15pt]
    \setlength{\itemsep}{2pt}
    \setlength{\parskip}{0pt}
    \setlength{\parsep}{0pt}
  }{\end{enumerate}}
\newenvironment{packed_2_enumerate}{
  \begin{enumerate}[topsep=2pt, partopsep=0pt,leftmargin=30pt]
    \setlength{\itemsep}{2pt}
    \setlength{\parskip}{0pt}
    \setlength{\parsep}{0pt}
  }{\end{enumerate}}
\begin{document}

\maketitle

\begin{abstract}
  In this paper we study the invariant metrizability and projective
  metrizability pro\-blems for the special case of the geodesic spray associated
  to the canonical connection of a Lie group. We prove that such canonical
  spray is projectively Finsler metrizable if and only if it is Riemann
  metrizable.  This result means that this structure is rigid in the sense that
  considering left-invariant metrics, the potentially much larger class of
  projective Finsler metrizable canonical sprays, corresponding to Lie groups, coincides
  with the class of Riemann metrizable canonical sprays. Generalisation of these results
  for geodesic orbit spaces are given.
\end{abstract}

\begin{description}
\item [2000 Mathematics Subject Classification:] 53B05, 53B40, 70H03, 70H30,
  70F17.

\item[Key words and phrases:] Euler-Lagrange equation; geodesics;
  metrizability and projective metrizability; Lie group, homogeneous space;
  geodesic orbit structure.
\end{description}

\section{Introduction}
\label{sec:introduction}

Lie groups represent a well developed theory of continuous symmetries of
mathematical structures, and it is an indispensable tool for modern
theoretical physics.  The algebraic and differential structures allow us to
consider a class of natural objects which have been extensively investigated
for over 100 years.  Among these, one of the most interesting, from
the differential geometric point of view, is the canonical geodesic structure. It
consists of the family of curves given by the 1-dimensional subgroups of a
Lie group and their left translated images.  The quadratic second order differential
equation (SODE) associated to this geodesic structure is called \emph{the
  canonical SODE} and the vector field corresponding to the geodesic
flow is called the \emph{canonical spray} of a Lie group.

In this paper we investigate the Riemann and Finsler metrizability and
projective metrizability of the canonical spray of a Lie group.  A spray is
called Riemann (resp.~Finsler) metrizable, if there exists a Riemann
(resp.~Finsler) metric such that its geodesics are the geodesics of the spray.
For the more general projective metrizability problem, one seeks a Riemann
(resp.~Finsler) metric whose geodesics coincide with the geodesics of the
spray, up to an orientation preserving reparameterization.  Recently, several
papers have appeared on the metrizability and projective metrizability
problems \cite{BDE, BM_2013_2, CMS, Matveev}. These articles show that the
metrizability and projective metrizability problems are very complex, and even
in low dimensional cases, the complete classification is very
difficult. Concerning Lie groups, G.~Thompson and his co-workers investigated
the inverse problem of Lagrangian dynamics for the canonical spray in a series
of papers \cite{GHT, GTM, TM, ST, Tho}. The problem
of the existence of a left-invariant variational principle for the canonical
spray was considered in \cite{Mestdag, MZ_ivp}, and for invariant second-order
differential equations, using the Helmholtz conditions, in \cite{CM}.

The notion of a Finsler metric is a generalisation of the
notion of a Riemannian metric, as Shiing-Shen Chern says ``Finsler geometry is just
Riemannian geometry without the quadratic restriction'',
\cite{Chern96}.  Therefore, every Riemann metrizable spray (necessarily
quadratic) is trivially Finsler metrizable. The converse, in general,
is not true; there are Finsler metrizable sprays (necessarily
non-quadratic) which are not Riemann metrizable. Even so, in the category of
quadratic sprays, Szab\'o's theorem \cite{SzaboZ} states that the two
notions of Finsler metrizability and Riemann metrizability coincide. The projective Finsler metrizability,
however, is \emph{essentially different} from projective Riemann metrizability, even in the case of quadratic
sprays. This follows since a quadratic spray can be projectively
equivalent, using a non-linear projective factor, to a
non-quadratic one and hence cannot be Riemann metrizable. Therefore, the
category of projective Finsler metrizable sprays is generally strictly larger
then the category of Riemann metrizable sprays, even for quadratic
sprays. 

The goal of this paper is to investigate the relationship between invariant
metrizability, and the invariant projective metrizability of the canonical
spray of a Lie group.  In the case of the invariant
metrizability problem, we ask if there exists a \emph{left-invariant} Riemann
(resp.~Finsler) metric, such that its geodesics are the geodesics of
the canonical spray. In the case of the invariant projective metrizability
problem, we ask if there exists a left-invariant Riemann (resp.~Finsler)
metric, such that its geodesics are projectively equivalent to the geodesics
of the canonical spray.  We prove that the canonical connection of a Lie group
is invariant projective Finsler metrizable if and only if it is invariant
Riemann metrizable.  This result shows that the structure is rigid in the
sense that by considering left-invariant metrics, the potentially much larger
class of projective Finsler metrizable canonical sprays, corresponding to Lie groups,
coincides with the class of Riemann metrizable canonical sprays.

We consider also homogeneous spaces $G/H$ with a special geodesic structure.  A
left invariant geodesic structure on $G/H$ is called \emph{geodesic orbit
  structure (g.o.~structure)}, if the geodesics can be derived as orbits of
1-parameter subgroups of $G$. In V.I.~Arnold's terminology these curves are
called "relative equilibria" \cite{Arnold}.  We prove that a g.o.~structure
is projectively invariant Riemann (resp.~Finsler) metrizable if and only if it
is invariant Riemann (resp.~Finsler) metrizable.  In the quadratic case we
also obtain the rigidity property: the class of projective Finsler metrizable
and Riemann metrizable g.o.~sprays coincide.

\section{Differential geometric background of the inverse problem of the
  calculus of variations}
\label{sec:2}

In this section we present the basic objects and tools required for our
investigation.  More details can be found in \cite{BM_2013_2, grif_muzs_2}.

\subsubsection*{SODEs, sprays and associated connection}

Let $M$ be a smooth, finite dimensional manifold, $TM$ its tangent bundle and $\pi$ the natural
projection. The map $J:TTM \to TTM$ denotes the canonical vertical
endomorphism and $C \in \mathfrak X (TM)$ the Liouville vector field.  If
$x=(x^{i})$ is a local coordinate system on $M$ and $(x,y)=(x^{i},y^{i})$ is
the induced coordinate system on $TM$, we then have that $J = dx^i \otimes \p
{y^i}$, and $C = y^i \p{y^i }$.  Using the vector field $C$ and Euler's
theorem on homogeneous functions, a Lagrangian $L:TM\to\mathbb R$ is an
$\l$-homogeneous function in the $y$ variable if and only if
\begin{equation}
  \label{eq:5}
  CF = \l \! \cdot\! F.
\end{equation}
A \textit{spray} is a vector field $S$ on $TM\!\setminus\! \{0\}$ satisfying
the relations $JS = C$ and $[C,S]=S$. The coordinate representation of a spray
$S$ takes the form
\begin{equation}
  \label{eq:S}
  S = y ^i \frac {\partial}{\partial x^i } +f^i (x,y)
  \frac {\partial}{\partial y^i },
\end{equation}
where the functions $f^i(x,y)$ are homogeneous of degree 2 in $y$.  $S$ is
called quadratic, if $f^i(x,y)$ are quadratic in $y$.  The \textit{geodesics}
of a spray $S$ are the curves $\gamma : I \to M$ such that $\dot \gamma$ is an
integral curve of $S$, that is $S \circ \dot \gamma = \ddot \gamma$. The curve
$\gamma$ is a geodesic of (\ref{eq:S}) if and only if it is a solution of the
SODE: $\ddot{x}^i=f^i(x, \dot{x})$, $i=1,\dots,n$.  Two family of curves are
projectively equivalent, if they coincide up to an orientation preserving
reparameterization.  In this spirit we also call two SODEs (or sprays)
projectively equivalent if their solutions (as parametrised curves) are
projectively equivalent. It is easy to show that two sprays $S$ and $\bar S$
are projectively equivalent if and only if there exists a 1-homogeneous
function $\mathcal P=\mathcal P(x,y)$ such that $\bar S = S -2 \mathcal P C$.

A \textit{non-linear connection} on $M$ is a type (1-1) tensor field
$\Gamma$ on $TM$ such that $J \Gamma = J$ and $\Gamma J = - J $. If $\Gamma $
is a non-linear connection, then $\Gamma^2 =id_{TTM}$ and the eigenspace corresponding to
the eigenvalue $-1$ is the vertical space.  The eigenspace $H$ corresponding
to the eigenvalue $+1$ is called the \emph{horizontal space}.  In the sequel
we will write $ h = \tfrac{1}{2}(I + \Gamma )$ and $v = \tfrac{1}{2}(I -
\Gamma )$, for the \label{def:horiz_vert} \textit{horizontal} and
\textit{vertical} projectors. If $S$ is a spray, then $\Gamma=[J,S]$ is a
non-linear connection which is called the \emph{canonical connection associated to $S$}.

\subsubsection*{The Euler-Lagrange ODE and PDE}

It is well known that if a \textit{Lagrangian} $L\colon TM \to \mathbb R$ is
regular, that is
\begin{math}
  % \label{eq:8}
  \det \> \Bigl( \frac {\partial^2L}{\partial y^i \partial
    y^j } \Bigl) \ \neq 0,
\end{math}
then the $2$-form
\begin{math}
  \Omega_L : = dd_JL
\end{math}
has maximal rank. For a regular Lagrangian $L$, homogeneous of degree 2, the
vector field $S$ on $TM$ defined by the equation
\begin{equation}
  \label{E-L}
  i_S \Omega_L + d(d_CL-L)= 0
\end{equation}
(where $d_CL$ denotes the Lie derivative of $L$ with respect to $C$) is a
spray, and the geodesics of $S$ are the solutions of the Euler-Lagrange ODE.

\bigskip

\noindent
Let us fix a spray $S$ on the manifold $M$. Then, to every Lagrangian $L$, a
scalar 1-form 
\begin{math}
  \label{eq:omega}
  \omega_L \!=\! i_S \Omega_L \!+\! d(d_C L \!-\!L),
\end{math}
called the \textit{Euler-Lagrange} form can be associated. Then the
Euler-Lagrange equation can be written as
\begin{equation}
  \label{eq:omega_L}
  \omega_L = 0.
\end{equation}
The equation (\ref{eq:omega_L}) holds if and only if the geodesics of the spray
$S$ are the solutions of the Euler-Lagrange equation associated to $L$.  We
remark, that when $S$ is given, \eqref{eq:omega_L} is a system of second order
PDE on $L$ called the \emph{Euler-Lagrange PDE}.

\bigskip

\section{Metrizability and projective metrizability}

The metrizability and projective metrizability problems, for a given
spray $S$, can be formulated as follows.
\begin{definition}
  A spray $S$ is
  \begin{packed_enumerate}
  \item [1)] Riemann (resp.~Finsler) metrizable, if there exists a Riemann
    (resp. Finsler) metric whose geodesics coincide with the
    geodesics of $S$.

  \item [2)] projective Riemann (resp.~Finsler) metrizable, if there exists a
    Riemann (resp.~Finsler) metric whose geodesics are projectively
    equivalent with the geodesics of $S$.
  \end{packed_enumerate}
\end{definition}
\noindent
Both the metrizability and projective metrizability problems can be
formulated in terms of a system of partial differential equations which is
composed of the appropriate homogeneity condition and the Euler-Lagrange PDE
equations on the energy function.  
\begin{proposition}
  \label{prop:metr-proj-metr_remark}
  A spray $S$ on a manifold $M$ is
  \begin{packed_enumerate}
  \item [1)] \emph{Riemann (resp.~Finsler) metrizable} if and only if there exists
    a quadratic (resp.~homogeneous of degree $2$) function $E\colon TM \!\to\! \R$, such that the
    matrix field $(\frac{\partial^2E}{\partial y^i \partial y^j})$ is positive
    definite on $TM\!\setminus\!\{0\}$ and the equation (\ref{eq:omega_L}) is satisfied with $L:=E$.
  \item [2)] \emph{projectively Riemann (resp.~Finsler) metrizable} if and
    only if there exists a quadratic (resp.~homogeneous of degree $2$) function
    $\widehat{E}\colon TM \!\to \!\R$, such that the matrix field
    $(\frac{\partial^2\widehat{E}}{\partial y^i \partial y^j})$ is positive
    definite on $TM\!\setminus\!\{0\}$ and the equation (\ref{eq:omega_L}) is satisfied
    with $L:=\sqrt{2\widehat{E}}$.
  \end{packed_enumerate}
\end{proposition}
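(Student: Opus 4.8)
The plan is to translate both equivalences into the language of Euler--Lagrange forms, using the dictionary already assembled in Section~\ref{sec:2}: the reparametrization-free characterization $\bar S = S - 2\mathcal P C$ of projective equivalence, and the statement following \eqref{eq:omega_L} that $\omega_L=0$ holds precisely when the geodesics of $S$ are the solutions of the Euler--Lagrange equation of $L$.

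First I would handle part 1) by identifying a Finsler metric with its energy function. A Finsler structure is the same datum as a function $E\colon TM\to\R$, homogeneous of degree $2$, whose fibre Hessian $g_{ij}=\partial^2 E/\partial y^i\partial y^j$ is positive definite on $TM\!\setminus\!\{0\}$: the norm is recovered by $F=\sqrt{2E}$, and the Riemann case is exactly the sub-case where $E$ is quadratic, i.e.\ $g_{ij}=g_{ij}(x)$. Positive definiteness makes $E$ a regular Lagrangian, so it admits a well-defined Euler--Lagrange spray whose integral curves are the constant-speed geodesics of the metric. By the equivalence recorded after \eqref{eq:omega_L}, the coincidence of these geodesics with the geodesics of $S$ is precisely the condition $\omega_E=0$ taken with $L:=E$. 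Reading this in both directions closes part 1): from a metrizing $F$ one puts $E=\tfrac12 F^2$ and reads off $\omega_E=0$, while any $E$ with positive definite Hessian satisfying $\omega_E=0$ reproduces a metrizing Finsler (resp.\ Riemann) metric.

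For part 2) the extra ingredient is the reparametrization freedom, which I would capture through the projective criterion $\bar S=S-2\mathcal P C$. The key observation is that, unlike the degree-$2$ energy, the degree-$1$ Lagrangian $L=F=\sqrt{2\widehat E}$ is \emph{not} regular: by \eqref{eq:5} with $\l=1$ one has $d_C F=CF=F$, so $\omega_F=i_S\Omega_F$ with $\Omega_F=dd_JF$, and homogeneity of degree~$1$ forces $C$ into the kernel of $\Omega_F$, whose rank is therefore only $2n-2$. I would then show that $i_S\Omega_F=0$ holds exactly when the spray $S$ lies in this kernel, and that the kernel meets the sprays precisely in the set $\{S_{\widehat E}-2\mathcal P C\}$, where $S_{\widehat E}$ is the geodesic spray of the metric $\widehat E=\tfrac12 F^2$. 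By the projective-equivalence criterion this says exactly that $S$ is projectively equivalent to the geodesic spray of the Finsler (resp.\ Riemann) metric $F=\sqrt{2\widehat E}$, which is the definition of projective metrizability; as before both implications follow from the equivalence $\omega_L=0\Leftrightarrow(\text{geodesics agree})$ with $L:=\sqrt{2\widehat E}$.

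The main obstacle is the kernel analysis of $\Omega_F$ in part 2): one must verify that the degeneracy of the degree-$1$ Lagrangian is exactly transverse to the spray direction, so that the kernel is spanned by $C$ together with one geodesic spray, and hence that solvability of $i_S\Omega_F=0$ singles out precisely the projective class $S-2\mathcal P C$ and nothing more. Once this is established, part 2) reduces to matching the degree-$1$ Euler--Lagrange form against $\bar S=S-2\mathcal P C$; the positivity of the Hessian enters only to guarantee that $\sqrt{2\widehat E}$ is a genuine Finsler norm, and the Riemann specializations in both parts are obtained simply by restricting to quadratic $E$ and $\widehat E$.
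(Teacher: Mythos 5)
Your proposal is correct, and while part 1) coincides with the paper's argument (identify the metric with its energy function $E=g_{ij}y^iy^j/2$ and invoke the equivalence between $\omega_E=0$ and the coincidence of geodesics), in part 2) you take a genuinely different route. The paper proves necessity by importing from \cite{BM_2012} the invariance of $\widehat{F}$ under parallel translation of the projectively shifted connection, $d_{\widehat{h}}\widehat{F}=0$ with $\widehat{h}=h-\mathcal P J-d_J\mathcal P\otimes C$, from which it extracts the explicit projective factor $\mathcal P=\tfrac{1}{2\widehat{F}}S\widehat{F}$ of \eqref{eq:9} and then reaches $i_Sdd_J\widehat{F}=0$ by the Fr\"olicher--Nijenhuis computation \eqref{eq:1}; sufficiency is handled separately by defining $\mathcal P$ via \eqref{eq:9} and checking that the spray of $\widehat{E}$ is $\widehat{S}=S-2\mathcal P C$. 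You instead obtain both directions at once from the characteristic distribution of the Hilbert two-form $dd_J\widehat{F}$: since $E_{y^iy^j}=\widehat{F}_{y^i}\widehat{F}_{y^j}+\widehat{F}\,\widehat{F}_{y^iy^j}$, positive definiteness of the Hessian of $\widehat{E}$ forces, via Cauchy--Schwarz, the fibre Hessian of $\widehat{F}$ to have rank $n-1$ with kernel $\R y$, so $\ker dd_J\widehat{F}$ is two-dimensional and spanned by $C$ and the geodesic spray $S_{\widehat{E}}$; applying $J$ to a spray $S=aS_{\widehat{E}}+bC$ in the kernel gives $a=1$, and $[C,S]=S$ forces $b$ to be $1$-homogeneous, i.e.\ $S=S_{\widehat{E}}-2\mathcal P C$. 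This is essentially the Hilbert-form approach of \cite{CMS} (going back to Rapcs\'ak \cite{Rapcsak}); it buys a self-contained equivalence without the external input $d_{\widehat{h}}\widehat{F}=0$, whereas the paper's route buys the closed formula \eqref{eq:9} for the projective factor, which it reuses in the converse. One point of emphasis to correct: you claim the positivity of the Hessian ``enters only'' to make $\sqrt{2\widehat{E}}$ a genuine Finsler norm, but in your argument it is also precisely what pins $\ker dd_J\widehat{F}$ down to dimension two --- without it the kernel could contain sprays outside the projective class of $S_{\widehat{E}}$ and your characterization would fail; since strong convexity is part of the hypothesis this is not a gap, but the rank computation you flag as the main obstacle is indeed the step that must be written out.
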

\begin{proof}
  For 1)~we remark that the Riemann (resp.~Finsler) metric
  $g=g_{ij}dx^i\otimes dx^j$ exists if and only if the associated energy
  function $E=g_{ij}y^iy^j/2$ exists. Thus a function $E$ is the Riemann
  (resp.~Finsler) energy function corresponding to $S$ if and only if it
  satisfies the conditions formulated in point 1)~of the proposition.

  For 2)~we note that the spray $S$ is projectively Riemann (resp. Finsler)
  metrizable if and only if there exists a Riemann (resp. Finsler) metrizable
  spray $\widehat{S}$ which is projectively equivalent to $S$.  In that case
  there exists a function $\mathcal P$, homogeneous of degree $1$, such that
  $\widehat{S}=S-2\mathcal P C$.  Let us denote by
  $\widehat{E}=\widehat{g}_{ij}y^iy^j/2$ the energy function of $\widehat{S}$
  and let $\widehat{F}=\sqrt{2 \widehat{E}}$ be the associated Finsler
  function.  It is well known that $\widehat{F}$ is invariant with respect to
  the parallel translation associated to the canonical non-linear connection $\widehat{\Gamma}$ and
  therefore we have
  \begin{math}
    d_{\widehat{h}}\widehat{F}=0
  \end{math}
  where $\widehat{h}=h\!-\!\mathcal P J\!-\!d_J\mathcal P \otimes C$ (see
  \cite{BM_2012}, section 4).  Using the homogeneity of degree $1$ of
  the $\widehat{F}$, we get
  \begin{equation}
    \label{eq:lambda}
    0 = d_{\widehat{h}}\widehat{F} = d_{h}\widehat{F}- d_{J}(\mathcal P
    \widehat{F}).
  \end{equation}
  Substituting $S$ into the above formula and using the homogeneity of
  degree $1$ of $\mathcal P$ and $\widehat{F}$ we get
  \begin{math}
    d_{\widehat{h}}\widehat{F}(S)=S \widehat{F}-2\mathcal P \widehat{F} =0.
  \end{math}
  It follows that the projective factor is given by 
  \begin{equation}
    \label{eq:9}
    \mathcal P =\tfrac{1}{2\widehat{F}}S \widehat{F}.
  \end{equation}
  Replacing $\mathcal P$ in formula \eqref{eq:lambda} and using the Fr\"olicher-Nijenhuis
  formalism we get
  \begin{equation}
    \label{eq:1}
    0=2 d_h\widehat{F}-d_J\big(\tfrac{1}{\widehat{F}} d_S\widehat{F} \,
    \widehat{F}\big) =d_{\Gamma+I}\widehat{F}-d_J d_S\widehat{F}
    =d_{[J,S]}\widehat{F}+d\widehat{F}-d_J d_S\widehat{F}
    =-i_Sdd_J\widehat{F}.
  \end{equation}
  On the other hand, since $\widehat{F}$ is a function homogeneous of
  degree 1, we have
  $C\widehat{F}=\widehat{F}$ and $dd_C\widehat{F}-d\widehat{F}=0$.
  Therefore, the Euler-Lagrange form is
  \begin{math}
    \omega_{\widehat{F}} \!=\!i_S dd_J\widehat{F} \!+\! dd_C \widehat{F}
    \!-\!d\widehat{F} \!=\! i_Sdd_J\widehat{F}.
  \end{math}
  Comparing this with \eqref{eq:1} we get that $\omega_{\widehat{F}}=0$. This
  shows that the conditions of 2) are necessary conditions.

  Conversely, let us suppose that the function $\widehat{E}$ satisfies the
  conditions of 2). If we define the projective factor $\mathcal P$ by
  using formula \eqref{eq:9}, then the spray $\widehat{S}=S-2\mathcal
  P C$ is projectively equivalent to the spray $S$. It is not  difficult to show that the spray associated to $\widehat{E}$ is given by
  $\widehat{S}$.
\end{proof}
We note, that a coordinate version of 2) was first proved by A.~Rapcs\'ak in
\cite{Rapcsak} and a coordinate free version of this the statement was given
by J.~Szilasi and Sz.~Vattam\'any in \cite{SziV}. Here we have presented a
different approach.

\begin{remark_num}
  \label{sec:rem_rap_EL}
  For a given spray $S$, the second order partial differential equation
  \begin{math}
    i_{S}dd_{J} F=0
  \end{math}
  is called the \emph{Rapcs\'ak equation}.  As we have seen in equation
  \eqref{eq:1}, for functions homogeneous of degree $1$, the Rapcs\'ak equation is
  equivalent to the Euler-Lagrange equation.
\end{remark_num}

\begin{proposition}
  \label{thm:EL-dh}
  Let $S$ be a spray and $L$ be a Lagrangian. If $L$ is a first
  integral for $S$, then we have
  \begin{equation}
    \label{eq:Lagrange_d_h}
    \omega_{L}=0 \qquad \Leftrightarrow \qquad d_hL=0
  \end{equation}
\end{proposition}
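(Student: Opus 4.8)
The plan is to reduce the claimed equivalence to a single identity. Specifically, I would aim to establish that, under the first-integral hypothesis $d_SL = SL = 0$,
\begin{equation*}
  \omega_L = -2\,d_hL .
\end{equation*}
Once this is in hand the equivalence is immediate: a fixed nonzero multiple of $d_hL$ vanishes precisely when $d_hL$ does.

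First I would unwind $d_hL$ using the decomposition $h = \tfrac12(I+\Gamma)$ with $\Gamma=[J,S]$. Because $L$ is a function, $i_hL=0$, so $d_hL = i_h\,dL$; and since $i_I$ acts as the identity on $1$-forms, this gives $d_hL = \tfrac12\big(dL + d_\Gamma L\big)$. The whole computation thus hinges on identifying $d_\Gamma L$.

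The next step is the Fr\"olicher--Nijenhuis calculus. The structural input is the commutation relation $d_{[J,S]} = d_Jd_S - d_Sd_J$ (with $J$ of degree $1$ and $S$ of degree $0$, so the degree-dependent sign is $+1$). Applying it to $L$ and using $d_SL=SL=0$ annihilates the term $d_Jd_SL$, leaving $d_\Gamma L = -d_Sd_JL$. I would then expand $d_Sd_JL = L_S(d_JL) = i_S\,dd_JL + d\big(i_S\,d_JL\big)$ by Cartan's formula, and evaluate the scalar $i_S\,d_JL = (i_J\,dL)(S) = dL(JS) = dL(C) = d_CL$ using the spray identity $JS=C$. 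Substituting back produces $d_\Gamma L = -\big(i_S\,dd_JL + dd_CL\big)$, and hence $d_hL = \tfrac12\big(dL - i_S\,dd_JL - dd_CL\big)$, which is exactly $-\tfrac12\omega_L$.

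I expect the main obstacle to be the bookkeeping in the Fr\"olicher--Nijenhuis formalism rather than any deep idea: getting the commutation relation $d_{[J,S]} = d_Jd_S - d_Sd_J$ with the correct degree-dependent sign, and correctly tracking the interior-product manipulations, above all the evaluation $i_S\,d_JL = d_CL$ that feeds the $dd_CL$ term of $\omega_L$. Reassuringly, even a sign slip would leave the equivalence intact, since it only affects the nonzero constant relating $\omega_L$ and $d_hL$.
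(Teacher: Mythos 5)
Your proposal is correct and matches the paper's own argument in all essentials: both proofs establish the identity $\omega_L = -2\,d_hL$ under the hypothesis $d_SL=0$ via the Fr\"olicher--Nijenhuis relation $d_{[J,S]} = d_Jd_S - d_Sd_J$ and the spray identity $JS=C$, with your direct evaluation $i_S d_JL = d_CL$ playing the role of the paper's commutator $i_Sd_J = d_{JS}-d_Ji_S+i_{[J,S]}$ applied to $L$. Running the computation from $d_hL$ toward $\omega_L$ rather than the reverse is a purely cosmetic difference, and your signs all check out.
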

\noindent
\begin{proof}
  Consider $L$  a first integral for  $S$, that is $S(L)=d_SL=0$. Then, using
  the Fr\"olicher-Nijenhuis calculus, we get
  \begin{alignat*}{1}
    \omega_{L}& = i_Sdd_JL + dd_CL -dL = d_Sd_JL -di_Sd_J L + dd_C L- dL
    \\
    & = d_{[S,J]}L+ d_Jd_SL -d(d_{JS}-d_Ji_S+i_{[J,S]})L + dd_CL - dL
    \\
    & = - d_{\Gamma}L - dL = - d_{\Gamma+I}L = -2 d_hL,
  \end{alignat*}
which shows the equivalence of the two conditions of \eqref{eq:Lagrange_d_h}.
\end{proof}

\begin{corollary}
  \label{thm:L_Lk}
  Let $S$ be a spray, let $L$ be a non-zero first integral for 
  $S$ and $f$ a smooth non-vanishing function on ${\mathbb R}$ with
  non-vanishing derivative. Then, $L$ satisfies the Euler-Lagrange
  equation \eqref{eq:omega_L}, associated to $S$, if and only if
  $f(L)$ satisfies the Euler-Lagrange
  equation \eqref{eq:omega_L}, associated to $S$
\end{corollary}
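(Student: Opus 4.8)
The plan is to reduce the statement to Proposition~\ref{thm:EL-dh}, exploiting the two facts that $f(L)$ is again a first integral of $S$ and that $d_h$ acts as a derivation on functions. Once both $L$ and $f(L)$ are known to be first integrals, the equivalence $\omega_{(\cdot)}=0 \Leftrightarrow d_h(\cdot)=0$ of Proposition~\ref{thm:EL-dh} applies to each of them, and the whole argument reduces to comparing $d_h L$ with $d_h\big(f(L)\big)$.

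First I would check that $f(L)$ is a first integral for $S$. Using $d_S\phi = S\phi$ and the chain rule, $d_S\big(f(L)\big) = f'(L)\, d_S L = f'(L)\cdot 0 = 0$, because $L$ is a first integral by hypothesis. Hence Proposition~\ref{thm:EL-dh} may be invoked both for $L$ and for $f(L)$, yielding $\omega_L = 0 \Leftrightarrow d_h L = 0$ and $\omega_{f(L)} = 0 \Leftrightarrow d_h\big(f(L)\big) = 0$.

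Next I would relate the two horizontal derivatives. Since $d_h\phi = i_h\, d\phi$ for any function $\phi$, and the chain rule gives $d\big(f(L)\big) = f'(L)\, dL$, it follows that $d_h\big(f(L)\big) = f'(L)\, d_h L$. This is the single computational identity on which the argument rests.

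Finally, because $f'$ is nowhere vanishing, $f'(L)$ is a nowhere-vanishing function on $TM\setminus\{0\}$, so $d_h\big(f(L)\big) = 0$ if and only if $d_h L = 0$. Chaining the equivalences gives $\omega_{f(L)} = 0 \Leftrightarrow d_h(f(L)) = 0 \Leftrightarrow d_h L = 0 \Leftrightarrow \omega_L = 0$, which is exactly the asserted statement. I do not anticipate a genuine obstacle here; the only point needing care is the derivation identity $d_h(f(L)) = f'(L)\, d_h L$, which is immediate once $d_h$ is read as $i_h \circ d$ on functions, combined with the non-vanishing of $f'$ to pass the equivalence through.
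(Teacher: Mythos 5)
Your proof is correct and takes essentially the same route as the paper's: invoke Proposition~\ref{thm:EL-dh} for both $L$ and $f(L)$, then use the identity $d_h\big(f(L)\big)=f'(L)\,d_hL$ together with the non-vanishing of $f'$ to pass between the two conditions. Your explicit verification that $f(L)$ is again a first integral (which is needed to apply the proposition to $f(L)$) is a detail the paper leaves implicit, but otherwise the arguments coincide.
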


\begin{proof}
  Using Theorem \ref{thm:EL-dh} we know that equation $\omega_{L}=0$ is
  equivalent to $d_hL=0$ and equation $\omega_{f(L)}=0$ is equivalent to
  $d_h(f(L))=0$. Moreover, since $f'(L)\neq 0$ and 
  \begin{equation}
    \label{eq:Lagrange_d_hom}
    d_h(f(L))=f'(L) \, d_hL,
  \end{equation}  
  we have $d_h(f(L))=0$ if and only if $d_hL=0$ holds.
\end{proof}
We will use Corollary \ref{thm:L_Lk} for the particular case when
$f(t)=t^k$. This general form of Corollary \ref{thm:L_Lk} corresponds
to \cite[Proposition 3.2]{AT92} and it was suggested
to us by am anonymous reviewer, to whom we express our thanks.

\section{Invariant metrizability and projective metrizability of the canonical
  flow of Lie groups}

Let $G$ be a finite dimensional Lie group.  We denote by $\lambda_{g}\colon G\to G$ the left
translation of $G$ defined by $\lambda_g(\hat g)=g \hat g$.  Let $x=(x^1,
\dots, x^n)$ be a coordinate system on $G$ and $(x,y)$ be the usual associated
standard coordinate system on $TG$ where $y=(y^1, \dots, y^n)$ with
$y^i=dx^i$.  We will be interested in investigating left invariant structures
on $TG$.  It is thus more convenient to introduce a kind of ``semi-invariant''
coordinate system using the left trivialisation $TG\cong G \times \g$. Indeed,
for every $g \in G$, the tangent space $T_{g}G$ is isomorphic to $\g$ by the
tangent map of the left translation $(\lambda^{-1}_{g})_{_*}: T_{g}G \to
\g=T_eG$.  Therefore, one can introduce a left invariant $\g$-valued
differential form $\theta\!:\!TG \to \g$, known as the Maurer-Cartan form,
defined by 
\begin{math}
  \theta \!= \!(\lambda^{-1}_{g})_{_{\!*}}\!dg.
\end{math} 
The corresponding \emph{semi-invariant coordinate system} is given by $(x^i,
\alpha^i)$ where
\begin{math}
  \alpha^i\!= \!(\lambda^{-1}_{x,*})_{_j}^idx^j.
\end{math}
The left invariant coordinate system $(x,\alpha)$ induces coordinates on the
second tangent bundle $TTG$ which will be denoted by
\begin{displaymath}
  (x^i,\alpha^i, X^i, A^i)\simeq X^i \p {x^i}\Bigl|_{(x,\alpha)} + A^i \p
  {\alpha^i}\Bigl|_{(x,\alpha)},
\end{displaymath}
We note that, by using a simplified notation
\begin{equation}
  \label{eq:notation}
  (x^i,\alpha^i, X^i, A^i) \simeq (x,\alpha, X, A),
\end{equation}
the coordinates $x\!=\!(x^i)$, and therefore the coordinates $X\!=\!(X^i)$ are
not left invariant, but $\alpha\!=\!(\alpha^i)$ and therefore the
corresponding $A\!=\!(A^i)$, are: the left translation by a group element $g$
induces on $TTG$ the following action:
$(\lambda_g)_{**}(x,\alpha, X, A) = (\lambda_gx,\alpha, \lambda_{g*}X, A
  )$.
\begin{definition}
  \label{thm:left_inv}
  A vector field $X\in \X G$ is called left invariant, if
  $(\lambda_{g})_{_*}X=X$ for every $g\in G$.  Similarly, a function $L\colon
  TG\to \R$ is called left invariant, if $L \circ \lambda_{g*}=L$, $\forall
  g\in G$.
\end{definition}
\noindent
Using the semi-invariant coordinate system $(x^i, \alpha^i)$, the function
$L\colon TG\to \R$ is left invariant if and only if its value does not depend
on the $x$-coordinates, that is
\begin{equation}
  \label{eq:inv_L}
  \frac{\partial L}{\partial x^i}\equiv 0, \qquad i=1, \dots , n.
\end{equation}

\begin{definition}
  The canonical geodesic structure on a Lie group $G$ is given by the
  1-parameter subgroups and their left (or right) translated images.  The
  canonical SODE of $G$ is the SODE corresponding to the canonical flow.
\end{definition}
\noindent
The main geometric objects associated to the canonical flow (spray, horizontal
and vertical projections etc.) were calculated in \cite{MZ_ivp} and \cite{TM}. Here
we just present the essential results needed for our purpose. More about the
computation of these objects can be found in the above mentioned papers.

Using a $G \to GL(n, \R)$, $x\to M_x$ matrix representation, the canonical
SODE can be described by
\begin{equation}
  \label{eq:sode}
  \ddot M_t  = \dot M_t M_t^{-1} \dot M_t,
\end{equation}
where we denote $M_t:=M_{x_t}$.  The \emph{canonical spray} of a Lie group $G$,
in the semi-invariant coordinate system $(x, \alpha)$, using the simplified
notation \eqref{eq:notation}, is given by
 \begin{equation}
   \label{eq:4}
   S_{(x,\alpha)}    =\lambda_x \alpha \, \p  {x}\Bigl|_{(x,\alpha)}.
\end{equation}
The vertical and horizontal projectors are defined as follows. For every
$(x,\alpha)\in TG$ we have
\begin{alignat}{1}
  \label{eq:6}
  v\Big(\lambda_xa \p x \Bigl|_{(x, \alpha)} + b\p \alpha\Bigl|_{(x,
    \alpha)}\Big) & = \bigl(\tfrac{1}{2}[a,\alpha]+b\bigl) \p
  \alpha\Bigl|_{(x, \alpha)},
  \\
  \label{eq:7}
  h\Big(\lambda_xa \p x \Bigl|_{(x, \alpha)} + b\p \alpha\Bigl|_{(x,
    \alpha)}\Big) & = \lambda_xa \p x \Bigl|_{(x, \alpha)} -
  \tfrac{1}{2}[a,\alpha] \p \alpha\Bigl|_{(x, \alpha)}.
\end{alignat}
 We have the following
\begin{proposition}
  \label{sec:lemma}
  (Canonical invariant Euler-Lagrange system) \newline A Lagrangian $L:TG\to
  \R$ is a left-invariant solution to the Euler-Lagrange equation associated
  to the canonical spray of the Lie group $G$, if and only if the system
  \begin{alignat}{2}
    \label{eq:met_1}
    \hphantom{i = 1, \dots , n \qquad}\frac{\partial L}{\partial x^i} &=0,& &i
    = 1, \dots , n
    \\
    \label{eq:met_3}
    [a, \alpha]^i \, \frac{\partial L}{\partial \alpha^i} &=0, \qquad & &
    \forall a \in \g,
  \end{alignat}
  is satisfied.
\end{proposition}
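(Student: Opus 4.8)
The plan is to reduce the Euler-Lagrange equation for a left-invariant $L$ to the condition $d_h L = 0$ by means of Proposition \ref{thm:EL-dh}, and then to read off \eqref{eq:met_1} and \eqref{eq:met_3} directly from the explicit coordinate formulas \eqref{eq:4} and \eqref{eq:7} for the canonical spray and its horizontal projector.

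First I would note that, by \eqref{eq:inv_L}, the left-invariance of $L$ is exactly equation \eqref{eq:met_1}, i.e.\ $\partial L/\partial x^i \equiv 0$. Thus \eqref{eq:met_1} is synonymous with left-invariance, and I would carry it as the standing hypothesis in both directions of the equivalence. The key structural observation is then that a left-invariant $L$ is automatically a first integral of the canonical spray: by \eqref{eq:4} the spray $S_{(x,\alpha)} = \lambda_x\alpha\,\p{x}$ has no $\p{\alpha}$-component, so $d_S L = S(L)$ involves only the derivatives $\partial L/\partial x^i$, which vanish under \eqref{eq:met_1}. Hence Proposition \ref{thm:EL-dh} applies and the Euler-Lagrange equation $\omega_L = 0$ becomes equivalent to $d_h L = 0$.

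It then remains to express $d_h L = 0$ in the semi-invariant coordinates. Since $(d_h L)(Z) = (hZ)(L)$, the vanishing of $d_h L$ is equivalent to $W(L) = 0$ for every $W$ in the horizontal distribution; and by \eqref{eq:7} the horizontal space at $(x,\alpha)$ is spanned by the vectors $\lambda_x a\,\p{x} - \tfrac12[a,\alpha]\,\p{\alpha}$ as $a$ runs over $\g$. Evaluating each such vector on $L$ and discarding the $\p{x}$-term by \eqref{eq:met_1} leaves precisely $[a,\alpha]^i\,\partial L/\partial\alpha^i = 0$ for all $a\in\g$, which is \eqref{eq:met_3}. Since every step is an equivalence, reading the chain in either direction establishes the stated biconditional.

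I expect the only real care to be needed in this last step: one must check that as $a$ ranges over all of $\g$ the vectors in \eqref{eq:7} genuinely exhaust the horizontal distribution, so that $d_h L = 0$ forces the condition for every $a$ rather than on a proper subspace, and that the pairing of $\p{x}$ and $\p{\alpha}$ with the partial derivatives of $L$ is performed in the same left-invariant frame in which \eqref{eq:4} and \eqref{eq:7} are written. Everything else is a routine consequence of the machinery already assembled.
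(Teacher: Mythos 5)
Your proposal is correct and follows essentially the same path as the paper's own proof: identify \eqref{eq:met_1} with left-invariance via \eqref{eq:inv_L}, note from the coordinate form \eqref{eq:4} that a left-invariant $L$ is automatically a first integral so that Proposition \ref{thm:EL-dh} reduces the Euler-Lagrange equation to $d_hL=0$, and then read \eqref{eq:met_3} off the horizontal projector formula \eqref{eq:7}. The spanning issue you flag is genuine but immediate, since the horizontal space at $(x,\alpha)$ is exactly the image of $a\mapsto h\bigl(\lambda_xa\,\p{x}\bigr)$ as $a$ runs over $\g$, which is what the paper uses implicitly.
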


\begin{proof}
  The equation (\ref{eq:met_1}) expresses the left invariant property of $L$.
  Moreover, from the local expression \eqref{eq:4} of the canonical spray
  $S$ it is clear, that if the Lagrangian $L$ is left invariant, then
  it is also a first integral of $S$, which means that 
  \begin{equation}
    \label{eq:d_SL}
    d_SL=0.
  \end{equation}
  Using Proposition \ref{thm:EL-dh}, we get that the Euler-Lagrange PDE is
  satisfied if and only if $d_hL=0$. Using \eqref{eq:7} we have
  \begin{displaymath}
    d_hL = 0 \quad \Leftrightarrow \quad  
    \lambda_xa \pp L x  - \tfrac{1}{2}[a,\alpha] \pp L \alpha = 0,
    \quad a \in \g.
  \end{displaymath}
  and from the left invariant property we get the later is identically zero if
  and only if the equations \eqref{eq:met_3} are satisfied.
\end{proof}
\noindent 
The following statement holds.
\begin{proposition}
  \label{prop:metr-proj-metr}
  The canonical spray $S$ of a Lie group is left invariant projectively
  Riemann (resp.~Finsler) metrizable if and only if it is left invariant
  Riemann (resp.~Finsler) metrizable.
\end{proposition}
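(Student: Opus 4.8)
The plan is to prove the two implications separately, with the forward direction being immediate and essentially all of the content residing in the converse. First I would note that left invariant Riemann (resp.~Finsler) metrizability trivially entails left invariant projective metrizability, since the identity reparameterization is orientation preserving: a left invariant metric whose geodesics coincide with those of $S$ is a fortiori a left invariant metric whose geodesics are projectively equivalent to those of $S$. So the task reduces to showing that projective metrizability is not genuinely weaker here.

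For the converse, suppose $S$ is left invariant projectively Finsler metrizable. By Proposition \ref{prop:metr-proj-metr_remark}, point 2), this means there is a left invariant, homogeneous of degree $2$ function $\widehat{E}\colon TG\to\R$ whose Hessian $(\partial^2\widehat{E}/\partial\alpha^i\partial\alpha^j)$ is positive definite on $TG\setminus\{0\}$ and for which $\omega_{\widehat{F}}=0$, where $\widehat{F}:=\sqrt{2\widehat{E}}$ is the associated Finsler function, homogeneous of degree $1$. The observation on which everything turns is that, because $\widehat{F}$ is left invariant, it is automatically a first integral of the canonical spray: exactly as in the proof of Proposition \ref{sec:lemma}, the local expression \eqref{eq:4} shows that $S$ acts only through $\partial/\partial x$, while left invariance gives $\partial\widehat{F}/\partial x^i\equiv0$, whence $d_S\widehat{F}=0$.

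With $\widehat{F}$ recognised as a first integral, I would bridge the degree $1$ and degree $2$ Euler-Lagrange conditions by invoking Corollary \ref{thm:L_Lk} with $f(t)=t^2$. Since $\widehat{F}>0$ on $TG\setminus\{0\}$, we have $f'(\widehat{F})=2\widehat{F}\neq0$ there, so the corollary applies and yields $\omega_{\widehat{F}}=0\Leftrightarrow\omega_{\widehat{F}^2}=0$. Because the Euler-Lagrange form depends linearly on the Lagrangian and $\widehat{F}^2=2\widehat{E}$, this gives $\omega_{\widehat{E}}=0$. Hence the same function $\widehat{E}$ is left invariant, homogeneous of degree $2$, with positive definite Hessian, and satisfies \eqref{eq:omega_L}; Proposition \ref{prop:metr-proj-metr_remark}, point 1), then lets me conclude that $S$ is left invariant Finsler metrizable. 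The Riemann case is identical, the only change being that $\widehat{E}$ is quadratic rather than merely homogeneous of degree $2$, so the very same $\widehat{E}$ now witnesses Riemann metrizability.

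The main obstacle, or rather the conceptual heart of the argument, is recognising that left invariance forces the metrizing function $\widehat{F}$ to be a first integral of $S$. This is precisely what collapses the a priori weaker projective condition $\omega_{\widehat{F}}=0$, a condition on the degree $1$ object $\widehat{F}$, onto the stronger metric condition $\omega_{\widehat{E}}=0$ on the degree $2$ object $\widehat{E}=\widehat{F}^2/2$: in general these are genuinely distinct (this is the source of the ``essential difference'' between projective Finsler and Riemann metrizability discussed in the introduction), and it is exactly the first integral property, through Corollary \ref{thm:L_Lk}, that identifies them. Once this is in place, everything else is bookkeeping with the criteria already established.
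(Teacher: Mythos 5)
Your proposal is correct and follows essentially the same route as the paper's own proof: left invariance of the metrizing function forces $d_S\widehat{F}=0$, Corollary \ref{thm:L_Lk} (with $f(t)=t^2$) transfers the Euler--Lagrange condition from $\widehat{F}$ to $\widehat{E}=\tfrac{1}{2}\widehat{F}^2$, and Proposition \ref{prop:metr-proj-metr_remark} closes the argument. If anything, you are slightly more careful than the paper, since you explicitly verify $f'(\widehat{F})=2\widehat{F}\neq 0$ on $TG\setminus\{0\}$ and note the linearity of $L\mapsto\omega_L$ needed to pass from $\omega_{2\widehat{E}}=0$ to $\omega_{\widehat{E}}=0$.
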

\begin{proof}
  It is clear that if $S$ is Riemann (resp.~Finsler) metrizable, then it is
  also projectively Riemann (resp.~Finsler) metrizable.  Conversely, let us
  suppose that $S$ is projectively Riemann (resp.~Finsler) metrizable.  Then,
  according to Proposition \ref{prop:metr-proj-metr_remark}, there exists a
  left invariant (quadratic, resp.~homogeneous of degree $2$) function $\widehat{E}:TG \to
  \R$ such that the matrix field
  \begin{math}
    (\frac{\partial^2 \widehat{E}}{\partial y^i \partial y^j})
  \end{math}
  is positive definite on $TM\setminus\{0\}$ and
  $\widehat{F}:=\sqrt{2\widehat{E}}$ satisfies the Euler-Lagrange PDE
  associated to $S$. Because of the left invariance condition, we have $d_SL=0$ and,
  using Corollary \ref{thm:L_Lk}, we get that
  $\widehat{E}:=\frac{1}{2}(\widehat{F})^2$ is also a solution of the
  Euler-Lagrange PDE associated to $S$. Then, according to Proposition
  \ref{prop:metr-proj-metr_remark}, the given $\widehat{E}$ is the energy
  function of a Riemann (resp.~Finsler) metric which implies that $S$ is
  Riemann (resp.~Finsler) metrizable.
\end{proof}
We have the following result.
\begin{theorem}
  \label{prop:metr-proj-metr-1}
  The canonical spray of a Lie group is left invariant projectively Finsler
  metrizable if and only if it is left invariant Riemann
  metrizable.
\end{theorem}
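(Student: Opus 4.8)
The plan is to first reduce the statement to the equivalence of left invariant Finsler and left invariant Riemann metrizability, and then to settle that equivalence by a compactness argument for the adjoint representation. Indeed, by Proposition~\ref{prop:metr-proj-metr} the canonical spray $S$ is left invariant projectively Finsler metrizable if and only if it is left invariant Finsler metrizable, so it is enough to show that $S$ is left invariant Finsler metrizable if and only if it is left invariant Riemann metrizable. One direction is immediate, since every Riemann metric is in particular a Finsler metric; the content lies in the converse.

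For the converse I would translate the two metrizability properties into the algebraic language of Proposition~\ref{sec:lemma}. Combining Proposition~\ref{prop:metr-proj-metr_remark} with Proposition~\ref{sec:lemma}, and recalling that a left invariant energy depends only on $\alpha$, the spray $S$ is left invariant Finsler metrizable exactly when there is a function $E=E(\alpha)$ on $\g$, homogeneous of degree $2$ with positive definite Hessian $(\partial^2 E/\partial\alpha^i\partial\alpha^j)$, satisfying $[a,\alpha]^i\,\partial E/\partial\alpha^i=0$ for all $a\in\g$, while it is left invariant Riemann metrizable exactly when such an $E$ can be chosen quadratic. Writing $F=\sqrt{2E}$, the condition \eqref{eq:met_3} says that the derivative of $F$ along each fundamental vector field $\alpha\mapsto[a,\alpha]$ of the adjoint action vanishes; integrating, $F$ is invariant under $\text{Ad}(\exp(ta))$ for all $t\in\R$ and $a\in\g$, hence under the identity component $G_0$ of $G$.

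The heart of the argument is then the following observation. Since the Hessian of $E$ is positive definite, $F$ is a Minkowski norm and its unit ball $B=\{\alpha\in\g:F(\alpha)\le 1\}$ is a compact, strictly convex body with the origin in its interior, and by the invariance above every element of $\text{Ad}(G_0)$ maps $B$ onto $B$. The stabiliser of a compact body with nonempty interior inside $GL(\g)$ is bounded and closed, hence compact, so the closure $K=\overline{\text{Ad}(G_0)}\subset GL(\g)$ is a compact subgroup. Averaging an arbitrary inner product over $K$ with respect to its Haar measure yields a $K$-invariant, and in particular $\text{ad}$-invariant, inner product $g$ on $\g$; differentiating invariance shows that $\text{ad}_a$ is skew with respect to $g$, which is precisely the polarised form of $g([a,\alpha],\alpha)=0$. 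Hence the quadratic energy $E_g=\tfrac12 g(\alpha,\alpha)$ satisfies \eqref{eq:met_1}--\eqref{eq:met_3}, so by Proposition~\ref{prop:metr-proj-metr_remark} it is the energy of a left invariant Riemann metric whose geodesics coincide with those of $S$, proving that $S$ is left invariant Riemann metrizable.

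The step I expect to be the main obstacle is the passage from the infinitesimal relation \eqref{eq:met_3} to genuine compactness of the adjoint image: one must check carefully that the pointwise identities $[a,\alpha]^i\,\partial E/\partial\alpha^i=0$ integrate to invariance of the Minkowski norm under $\text{Ad}(G_0)$, and that invariance of a strictly convex compact unit ball really does force $\text{Ad}(G_0)$ into a relatively compact subgroup of $GL(\g)$. Once this compactness is secured, the averaging construction and the final polarisation check that $E_g$ solves \eqref{eq:met_3} are routine.
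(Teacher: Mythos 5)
Your proposal is correct, but after the first step it takes a genuinely different route from the paper. The reduction via Proposition~\ref{prop:metr-proj-metr} (projective Finsler metrizability $\Rightarrow$ Finsler metrizability, using $d_SL=0$ and Corollary~\ref{thm:L_Lk}) is exactly the paper's opening move; from there the paper argues in two lines: the canonical spray is quadratic, so its connection is linear, hence an invariant Finsler metric compatible with it is a Berwald metric, and Szab\'o's theorem \cite{SzaboZ} then supplies a Riemannian metric with the same geodesics. You instead stay at the level of the Lie algebra: equation \eqref{eq:met_3} says $dE$ annihilates the linear vector fields $\alpha\mapsto \mathrm{ad}_a\alpha$, whose flows are $t\mapsto\mathrm{Ad}_{\exp ta}$, so the Minkowski norm $F$ is $\mathrm{Ad}(G_0)$-invariant; the stabiliser in $GL(\g)$ of its unit ball $B$ is compact (if $rB_{\mathrm{euc}}\subset B\subset RB_{\mathrm{euc}}$ and $T(B)=B$, then $\|T\|,\|T^{-1}\|\le R/r$, and the stabiliser is closed), so $K=\overline{\mathrm{Ad}(G_0)}$ is a compact subgroup, and Haar averaging gives an $\mathrm{ad}$-skew inner product, i.e.\ a quadratic solution of \eqref{eq:met_1}--\eqref{eq:met_3}. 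Both of your worried-about steps do go through: the infinitesimal identity integrates because \eqref{eq:met_3} holds at every point of $\g\setminus\{0\}$, which each $\mathrm{Ad}_{\exp ta}$ preserves, and the compactness of the stabiliser is the elementary norm bound above (strict convexity is not even needed). Comparing the two: the paper's argument is shorter but rests on Szab\'o's theorem as a deep black box, and it transfers verbatim to quadratic g.o.~sprays (Theorem~\ref{thm:metr-proj-metr-hom}), where your Lie-algebra-specific averaging would need nontrivial adaptation; on the other hand, your construction is self-contained and delivers the Riemannian metric as a scalar product on $\g$, hence \emph{manifestly left invariant} and directly verifying the criterion $\langle[a,\alpha],\alpha\rangle=0$ of Corollary~\ref{prop:metr-proj-canon} --- a point the paper's appeal to Szab\'o's theorem leaves implicit, since that theorem as stated produces a Riemannian metric without certifying its invariance.
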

\begin{proof}
  In one direction the statement is trivial. If the canonical spray is Riemann
  metrizable, then it is trivially Finsler metrizable and also projectively
  Finsler metrizable.
  \\
  Let us consider the converse statement, and suppose that the canonical spray
  $S$ is projectively Finsler metrizable. Then, according to Proposition
  \ref{prop:metr-proj-metr}, it is also Finsler metrizable.  Since $S$ is
  quadratic, it follows that the associated connection is linear. Hence, the
  Finsler metrizability induces the existence of a Berwald metric on the Lie
  group. Using Szab\'o's theorem which states that for every Berwald metric
  there exists a Riemannian metric such that the geodesics of the Berwald and
  Riemannian metrics are the same (cf.~\cite{SzaboZ}), we get that the
  canonical spray is Riemann metrizable.
\end{proof}
\medskip Using Proposition \ref{sec:lemma} we obtain the following.
\begin{corollary}
  \label{prop:metr-proj-canon}
  The canonical spray of a Lie group $G$ is left invariant (Riemann, Finsler,
  projectively Riemann or projectively Finsler) metrizable if and only if there
  exists a scalar product $\langle \ , \ \rangle$ on $\g$ such that
  \begin{equation}
    \label{eq:EL_met}
    \langle [a,\alpha], \alpha \rangle = 0
  \end{equation}
  for every $a,\alpha \in \g$.
\end{corollary}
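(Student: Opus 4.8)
The plan is to first collapse the four metrizability notions appearing in the statement into a single one, and then to translate the resulting invariant Euler--Lagrange condition into the algebraic identity on $\g$.

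First I would note that Proposition \ref{prop:metr-proj-metr} identifies left invariant projective Riemann (resp.\ Finsler) metrizability of the canonical spray with left invariant Riemann (resp.\ Finsler) metrizability, while Theorem \ref{prop:metr-proj-metr-1} identifies left invariant projective Finsler metrizability with left invariant Riemann metrizability. Since Riemann metrizability trivially implies each of the other three properties, the four properties listed in the corollary are mutually equivalent, and it is enough to characterise left invariant \emph{Riemann} metrizability.

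By point 1) of Proposition \ref{prop:metr-proj-metr_remark}, left invariant Riemann metrizability is equivalent to the existence of a left invariant quadratic function $E\colon TG \to \R$ whose fibre Hessian $(\partial^2 E/\partial y^i\partial y^j)$ is positive definite and which satisfies the Euler--Lagrange PDE with $L:=E$. By \eqref{eq:inv_L} a left invariant quadratic function is exactly one of the form $E = \tfrac12\langle \alpha,\alpha\rangle = \tfrac12 g_{ij}\alpha^i\alpha^j$ with $(g_{ij})$ a constant symmetric matrix and $\langle\,\cdot\,,\,\cdot\,\rangle$ the corresponding bilinear form on $\g$. Since the passage from the standard fibre coordinates $y$ to the semi-invariant ones $\alpha$ is fibrewise linear and invertible, the Hessian transforms by congruence, so positive definiteness of $(\partial^2 E/\partial y^i\partial y^j)$ is equivalent to positive definiteness of $(g_{ij})$, i.e.\ to $\langle\,\cdot\,,\,\cdot\,\rangle$ being a scalar product.

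Finally I would apply Proposition \ref{sec:lemma}: because $E$ is left invariant, \eqref{eq:met_1} holds automatically and the Euler--Lagrange equation reduces to \eqref{eq:met_3}, that is $[a,\alpha]^i\,\partial E/\partial\alpha^i = 0$ for every $a\in\g$. Computing $\partial E/\partial\alpha^i = g_{ij}\alpha^j$ yields $[a,\alpha]^i g_{ij}\alpha^j = \langle[a,\alpha],\alpha\rangle$, so \eqref{eq:met_3} becomes precisely \eqref{eq:EL_met}, namely $\langle[a,\alpha],\alpha\rangle=0$ for all $a,\alpha\in\g$, giving the claimed equivalence. The argument is essentially a dictionary translation; the only point needing any care is the identification of the analytic positive definiteness in Proposition \ref{prop:metr-proj-metr_remark} with the algebraic positive definiteness of $\langle\,\cdot\,,\,\cdot\,\rangle$, which the fibrewise linear change of coordinates handles cleanly, so no serious obstacle arises.
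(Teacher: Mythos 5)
Your proof is correct and takes essentially the same route as the paper: reduce all four notions to left invariant Riemann metrizability via Proposition \ref{prop:metr-proj-metr} and Theorem \ref{prop:metr-proj-metr-1}, then use the invariant Euler--Lagrange system of Proposition \ref{sec:lemma}, whose equation \eqref{eq:met_3} applied to the quadratic energy $E=\tfrac{1}{2}\langle\alpha,\alpha\rangle$ is exactly \eqref{eq:EL_met}. You merely make explicit some details the paper's terse proof leaves implicit (the converse direction and the congruence argument identifying positive definiteness of the $y$-Hessian with that of the scalar product), which is fine.
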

\begin{proof}
  An invariant Riemannian metric induces a scalar product $\langle \ , \
  \rangle$ on $\g$.  Using the coordinate system $(x,\alpha)$ on $TG\simeq
  G\times \g$, the associated energy function is given by $E: G\times \g \to
  \R$, where $E(x,\alpha)= \langle \alpha, \alpha \rangle$.  The
  Euler-Lagrange equation \eqref{eq:met_3} then implies \eqref{eq:EL_met}.
\end{proof}

\begin{remark_num}
  We want to draw attention to few interesting phenomena.  First of all,
  although the canonical spray of a Lie group is a very natural object, it is
  not true that it is always metrizable. In \cite{GHT} there are several
  examples of Lie groups and Lie algebras where the canonical spray is non
  metrizable.
  \\
  Secondly, despite the fact that the canonical spray is left (and also right)
  invariant, and the Euler-Lagrange equation inherits the symmetries of the
  Lagrangian, it is not true that the ``\emph{metrizability}'' property means
  automatically ``\emph{metrizability by a left invariant metric}''.
\end{remark_num}
\noindent Indeed, for example the 3-dimensional Heisenberg group $\mathbb H_3$
is not metrizable or projectively metrizable with an invariant Riemann (or
Finsler) metric \cite{MZ_ivp}. However, since the curvature tensor vanishes
identically, the canonical spray is metrizable.  The corresponding (non
invariant) Riemannian metric is given by 
\begin{math}
  g \!=\! dx^2 \!+\! dy^2 \!+\! (dz\!- \frac{y}{2} dx\!-\! \frac{x}{2}
    dy)^2,
\end{math}
(see \cite{GHT}).

\medskip

Theorem \ref{prop:metr-proj-metr-1} shows that the geometric structure
associated to the canonical spray of a Lie group has a certain rigidity
property. The potentially much larger class of Lie groups where the canonical
spray is projectively equivalent to a Finsler geodesic structure, actually
coincides with the class of invariant Riemann metrizable sprays.  We note that
this property relies heavily on the fact that, 1) the canonical spray is
quadratic, and 2) the Lie derivative of a left invariant Lagrange function on
$G$, with respect to the canonical spray, is identically zero. The second
property is not true in general for an arbitrary left invariant spray. However,
interesting generalisation can be obtained by considering the class of
homogeneous spaces. We consider this in the next section.

\section{Invariant metrizability and projective metrizability of a geodesic
  orbit structure of homogeneous spaces}
\label{sec:homogeneous}

Let $M$ be a connected differentiable manifold on which the Lie transformation
group $G$ acts transitively.  Let us fix an origin $o \in M$ and denote by $H$
the stabiliser of $o \in M$ in the group $G$ and by $\pi : G \to G/H$ the
projection map. As usual we call $H$ the isotropy group of the homogeneous
space $G/H$. Then $M$ is isomorphic to the factor space $G/H$ with origin $H$
and its tangent space at $o\in M$ is isomorphic to $\g / \h$, where $\g$ and
$\h$ are the Lie-algebras of the Lie groups $G$ and $H$ respectively.  The
action of $G$ on $M$ is determined by the map
\begin{displaymath}
  \lambda:(g,m)\mapsto \lambda _g m = g\cdot m: G \times M \rightarrow M. 
\end{displaymath}
Geodesic structures, sprays, metrics, Lagrangians on $M$ are called
\emph{invariant}, if they are invariant with respect to the action of $G$.  It
is clear, that invariant sprays, metrics, and Lagrangians can be characterised
by their values on $T_oM$.  

In the most interesting cases, the algebraic structures and invariant
geometric structures are intimately related: the geodesics are the image of
the 1-parameter subgroups of the group $G$ and their left translated
images. More precisely we have the following
\begin{definition}
  \label{def:hom_geod}
  A geodesic $\gamma (t)$, emanating from the origin $o\! \in\! M$, is called
  \emph{homogeneous}, if there exists $X_\gamma \!\in \!  \g$, such that
  $\gamma (t)$ is the orbit of the 1-parameter subgroup $\{\exp tX_\gamma,
  \;t\in \mathbb R\}$ of $G$, that is
  \begin{equation}
    \label{eq:21}
    \gamma (t) = \lambda _{\exp tX_\gamma } o = (\exp tX_\gamma) \cdot o.
  \end{equation}
  The Lie algebra element $X_\gamma \!\in \!  \g$ is called the \emph{geodesic
    vector} associated to the direction $\dot \gamma(0)\in T_oM$.
\end{definition}

\begin{definition}
  A left invariant geodesic structure is called \emph{geodesic orbit
    structure} (g.o.~structure), if any geodesic $\gamma (t)$ emanating from
  the origin $o\! \in\! M$ is homogeneous (in the sense of Definition
  \ref{def:hom_geod}). A spray is called \emph{geodesic orbit spray}
  (g.o.~spray), if it corresponds to a g.o.~structure.
\end{definition}
\noindent
Homogeneous geodesics are called in V.I.~Arnold's terminology ``\emph{relative
  equilibria}'' \cite{Arnold}.

\begin{definition} 
  A map $\sigma\colon T_oM \to \g$ is called a \emph{homogeneous lift}\footnote{In
    \cite{MzNp} the terminology \emph{horizontal lift} was used. However, in
    Finsler geometry, this terminology is widely used for a different object.}
  if the following conditions are satisfied:
  \begin{packed_2_enumerate}
  \item $\pi_*\circ \sigma = id_{T_oM}$.
  \item $\sigma$ is 1-homogeneous, that is
    \begin{math}
      \sigma(\kappa\cdot v) = \kappa \,\sigma (v),
      \end{math}
      for every $v \in T_o M$ and $\kappa \in \R$.
  \item $\sigma$ is ${\Ad}(H)$-invariant, that is
      \begin{math}
        \sigma({\lambda}_{h*}v) = {\Ad}_h\sigma(v)
      \end{math}
      for all $h \in H$ and $v \in T_oM$.
    \end{packed_2_enumerate}
    The homogeneous lift $\sigma$ is called $\mathcal{C}^{\infty}$-differentiable
    if it is continuous on $T_oM$ and $\mathcal{C}^{\infty}$-differentiable on
    $T_oM\setminus \{0\}$.
\end{definition}
It is clear that any g.o.~spray determines a
$\mathcal{C}^{\infty}$-differentiable homogeneous lift by associating to $v\in
T_oM$ its geodesic vector $X=\sigma(v)$ and vice versa, every homogeneous lift
determines a g.o. spray by left translations.

\begin{lemma}
  \label{thm:lemma_inv}
  Invariant functions are constant along the geodesics of a g.o.~spray.
\end{lemma}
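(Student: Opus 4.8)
The plan is to show that an invariant Lagrangian $L\colon TM \to \R$ is a first integral of the g.o.~spray $S$, i.e. that $d_S L = 0$, which is precisely the assertion that $L$ is constant along the integral curves $\dot\gamma$ of $S$, equivalently along the geodesics of $S$ (recall $\tfrac{d}{dt}L(\dot\gamma(t)) = (d_S L)(\dot\gamma(t))$, and every point of $TM\setminus\{0\}$ arises as some $\dot\gamma(0)$). So it suffices to prove that $t \mapsto L(\dot\gamma(t))$ is constant for every geodesic $\gamma$. Since both $L$ and the geodesic structure are $G$-invariant, I would first reduce to geodesics emanating from the origin: if $\gamma$ is an arbitrary geodesic with $\gamma(0) = \lambda_g(o)$, then $\lambda_{g^{-1}} \circ \gamma$ is again a geodesic, now through $o$, and invariance of $L$ gives $L(\dot\gamma(t)) = L(\lambda_{g^{-1}*}\dot\gamma(t))$, so it is enough to treat the origin case.

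The main step is a velocity formula for homogeneous geodesics. By the g.o.~property (Definition \ref{def:hom_geod}) a geodesic through the origin has the form $\gamma(t) = \lambda_{\exp tX_\gamma}(o)$ for its geodesic vector $X_\gamma \in \g$. Using that $\exp$ is a one-parameter subgroup together with the action axiom $\lambda_{g_1 g_2} = \lambda_{g_1}\circ \lambda_{g_2}$, I would write
\[
  \gamma(t+s) = \lambda_{\exp((t+s)X_\gamma)}(o) = \lambda_{\exp tX_\gamma}\bigl(\lambda_{\exp sX_\gamma}(o)\bigr) = \lambda_{\exp tX_\gamma}(\gamma(s)),
\]
and, differentiating with respect to $s$ at $s=0$ (the map $\lambda_{\exp tX_\gamma}$ being fixed), obtain the key identity
\[
  \dot\gamma(t) = \lambda_{\exp tX_\gamma *}\,\dot\gamma(0).
\]
Thus the velocity of a homogeneous geodesic at time $t$ is carried from its initial velocity by the tangent action of the group element $\exp tX_\gamma$; this is the geometric content of calling such curves ``relative equilibria''.

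With this identity in hand the conclusion is immediate from the $G$-invariance $L \circ \lambda_{g*} = L$: taking $g = \exp tX_\gamma$ gives
\[
  L(\dot\gamma(t)) = L\bigl(\lambda_{\exp tX_\gamma *}\,\dot\gamma(0)\bigr) = L(\dot\gamma(0)),
\]
which is independent of $t$, so $L$ is constant along every geodesic and $d_S L = 0$. I expect the only genuine obstacle to be the careful verification of the velocity formula — in particular keeping track of which argument the group element acts on when differentiating — while the invariance argument itself is purely formal. This is exactly the homogeneous-space analogue of the identity $d_S L = 0$ established for left-invariant Lagrangians in the proof of Proposition \ref{sec:lemma}, and it is what will allow Proposition \ref{thm:EL-dh} and Corollary \ref{thm:L_Lk} to be applied in the g.o.~setting.
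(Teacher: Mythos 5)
Your proposal is correct and follows essentially the same route as the paper: reduce to geodesics through the origin by invariance, derive the velocity identity $\dot\gamma(t)=(\lambda_{\exp tX_\gamma})_*\dot\gamma(0)$ from the one-parameter subgroup property (this is exactly the paper's equations \eqref{eq:2}--\eqref{eq:3}), and conclude by applying $L\circ\lambda_{g*}=L$ with $g=\exp tX_\gamma$. The only differences are cosmetic --- you spell out the reduction-to-origin step and the subsequent reformulation as $d_SL=0$ slightly more explicitly than the paper does.
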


\begin{proof}
  Because of the invariance, it is enough to show this property for geodesics
  emanating from the origin $o \in M$.  Let $S$ be a geodesic orbit spray on a
  homogeneous space $M=G/H$.  For a geodesic $\gamma(t)$ emanating from the
  origin $o \in M$ there exists a geodesic vector $X_\gamma\in \g$ such that
  we have \eqref{eq:21}. Using the 1-parameter subgroup property we can write
  \begin{equation}
    \label{eq:2}
    \gamma (t_0+t) = \lambda _{\exp (t_0+t) X_\gamma } o= \lambda _{\exp t_0
      X_\gamma } \lambda _{\exp t X_\gamma } o
  \end{equation}
  and therefore we have
  \begin{equation}
    \label{eq:3}
    \dot \gamma (t_0) =\frac{d}{dt}\Big|_{t=0} \gamma (t_0+t) 
    = \frac{d}{dt}\Big|_{t=0} \lambda _{\exp t_0 X_\gamma } \lambda _{\exp t X_\gamma } o
    =  (\lambda _{\exp t_0 X_\gamma })_* \dot \gamma(0).
  \end{equation}
  Let $L\colon TM\to\R$ be an invariant function.  For every $p\!\in\! M$,
  $v\!\in\! T_pM$ and $g\!\in \!G$ we have $L(v)=L(\lambda_{g,*}v)$. Applying
  this with $v=\dot \gamma(0)$ and $g=\exp t_0 X_\gamma$ we get from
  \eqref{eq:3} that $L(\dot \gamma (t_0))=L(\dot \gamma (0))$ that is $L$ is
  constant on the geodesics.
\end{proof}
Using Lemma \ref{thm:lemma_inv} we can obtain the following
generalisation of the Proposition \ref{prop:metr-proj-metr}.
\begin{proposition}
  A g.o.~spray is projectively Riemann (resp.~Finsler) invariant metrizable if
  and only if it is invariant Riemann (resp.~Finsler) metrizable.
\end{proposition}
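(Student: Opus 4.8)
The plan is to mirror the argument of Proposition \ref{prop:metr-proj-metr}, replacing the explicit first-integral computation that was available for the canonical spray of a Lie group by the invariance-based Lemma \ref{thm:lemma_inv}. One implication is immediate: if the g.o.~spray $S$ is invariant Riemann (resp.~Finsler) metrizable, then the geodesics of the metric coincide with those of $S$, which is the special case of projective equivalence in which the projective factor vanishes, so $S$ is invariant projectively Riemann (resp.~Finsler) metrizable.

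For the converse, suppose $S$ is invariant projectively Riemann (resp.~Finsler) metrizable. Applying Proposition \ref{prop:metr-proj-metr_remark} in the invariant category (the invariance being precisely the translation of invariant metrizability into energy-function language, exactly as in the proof of Proposition \ref{prop:metr-proj-metr}), there exists an invariant function $\widehat{E}\colon TM\to\R$, quadratic (resp.~homogeneous of degree $2$), with positive definite Hessian $(\partial^2\widehat{E}/\partial y^i\partial y^j)$ on $TM\setminus\{0\}$, such that $\widehat{F}:=\sqrt{2\widehat{E}}$ satisfies the Euler-Lagrange PDE $\omega_{\widehat{F}}=0$ associated to $S$. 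Since $\widehat{E}$ is invariant and $\widehat{F}$ is a function of $\widehat{E}$ alone, $\widehat{F}$ is itself an invariant function.

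The key step is to promote $\widehat{F}$ to a first integral of $S$. Here Lemma \ref{thm:lemma_inv} takes over the role that the explicit form \eqref{eq:4} of the canonical spray played in the Lie group case: because $\widehat{F}$ is invariant and $S$ is a g.o.~spray, $\widehat{F}$ is constant along the geodesics of $S$, hence $d_S\widehat{F}=0$. With this first-integral property in hand, Corollary \ref{thm:L_Lk} applied to the non-vanishing function $f(t)=t^2$ (legitimate on $TM\setminus\{0\}$, where $\widehat{F}>0$ by positive definiteness of the Hessian) shows that $\widehat{E}=\tfrac12(\widehat{F})^2=\tfrac12 f(\widehat{F})$ also solves the Euler-Lagrange PDE associated to $S$.

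Finally, $\widehat{E}$ is an invariant, quadratic (resp.~homogeneous of degree $2$) function with positive definite Hessian solving $\omega_{\widehat{E}}=0$, so by part~1) of Proposition \ref{prop:metr-proj-metr_remark} it is the energy function of an invariant Riemann (resp.~Finsler) metric whose geodesics coincide with those of $S$; that is, $S$ is invariant Riemann (resp.~Finsler) metrizable. I expect the only delicate point to be the first-integral step: one must invoke Lemma \ref{thm:lemma_inv} cleanly and confirm that invariance is preserved both under the square root $\widehat{E}\mapsto\widehat{F}$ and under the passage back via $f$. Once the first integral $d_S\widehat{F}=0$ is secured, the remainder is a direct transcription of the Lie group argument, and the genuine novelty of the statement is entirely absorbed into Lemma \ref{thm:lemma_inv}.
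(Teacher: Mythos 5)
Your proposal is correct and follows exactly the paper's route: the paper likewise invokes Lemma \ref{thm:lemma_inv} to establish that the invariant Lagrangian is a first integral of the g.o.~spray ($d_SL=0$), then applies Corollary \ref{thm:L_Lk} and repeats the argument of Proposition \ref{prop:metr-proj-metr}. Your write-up merely spells out in detail what the paper compresses into ``the same argument that was used for Proposition \ref{prop:metr-proj-metr}'', including the legitimate use of $f(t)=t^2$ on $TM\setminus\{0\}$ where $\widehat{F}>0$.
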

\begin{proof}
  Let $S$ be a g.o.~spray.  As Lemma \ref{thm:lemma_inv} shows, if $L\colon
  TM\to\R$ is a $G$ invariant Lagrangian, then $L$ is constant along the
  geodesics, that is along the integral curves of $S$.  Consequently we have
  $d_SL=0$. Using Corollary \ref{thm:L_Lk} and the same argument that was used
  for Proposition \ref{prop:metr-proj-metr} we can obtain the proof of the
  proposition.
\end{proof}
We remark that the connection determined by a g.o.~spray is not necessarily
linear, therefore it is not true in general that the Finsler metrizability
entail the Riemann metrizability as it was the case for the canonical spray of
Lie groups.  However, if the g.o.~spray is quadratic then the associated
connection is linear. Therefore we can use Szab\'o's theorem and similarly to
Theorem \ref{prop:metr-proj-metr-1} we can get the following
\begin{theorem}
  \label{thm:metr-proj-metr-hom}
  A quadratic g.o.~spray is invariant projectively Finsler metrizable if
  and only if it is invariant Riemann metrizable.
\end{theorem}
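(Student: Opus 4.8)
The plan is to mirror, almost verbatim, the proof of Theorem \ref{prop:metr-proj-metr-1}, replacing the Lie-group inputs by their homogeneous-space counterparts. Since the claim is an equivalence, I would dispose of the two implications separately. One direction is immediate and requires no computation: if the quadratic g.o.~spray $S$ is invariant Riemann metrizable, then the invariant Riemann metric is in particular an invariant Finsler metric, so $S$ is invariant Finsler metrizable, and therefore a fortiori invariant projectively Finsler metrizable.

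For the substantive converse, suppose $S$ is a quadratic g.o.~spray that is invariant projectively Finsler metrizable. First I would invoke the Proposition immediately preceding this theorem, which upgrades invariant projective Finsler metrizability of a g.o.~spray to genuine invariant Finsler metrizability; the mechanism behind that Proposition is Lemma \ref{thm:lemma_inv} (invariant Lagrangians are first integrals of $S$, so $d_SL=0$) together with Corollary \ref{thm:L_Lk}, exactly as in the Lie-group setting of Proposition \ref{prop:metr-proj-metr}. This yields an invariant Finsler function whose geodesics are those of $S$. Next I would feed in the quadratic hypothesis: for a quadratic spray the canonical connection $\Gamma=[J,S]$ is linear, so the invariant Finsler metric is of Berwald type. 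Finally, Szab\'o's theorem supplies a Riemannian metric whose geodesics coincide with those of the Berwald metric, hence with those of $S$, delivering Riemann metrizability.

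The step I expect to demand the most care is ensuring that the Riemannian metric obtained at the end is \emph{invariant}, so that one lands in ``invariant Riemann metrizable'' and not merely ``Riemann metrizable''; unlike the Lie-group case, here the connection is linear only by virtue of the quadratic assumption, and Szab\'o's construction does not obviously respect the $G$-action. To close this gap I would exploit that the Berwald (linear) connection is itself $G$-invariant, being the canonical connection of the invariant spray $S$. Szab\'o's procedure produces the metric by averaging an inner product on the fibre $T_oM$ over the (compact) holonomy group of the connection and then parallel-transporting it; I would argue that one may choose this inner product so as to be simultaneously invariant under the holonomy group and under the isotropy representation $\Ad(H)$ on $T_oM$, after which invariant parallel transport yields a $G$-invariant Riemannian metric with the prescribed geodesics. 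Verifying the compatibility of these two averaging requirements is precisely the place where the homogeneous setting differs in substance from the rigid Lie-group situation, and it is the portion I would check most carefully before asserting the conclusion.
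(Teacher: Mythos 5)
Your proposal is correct and follows the paper's own route exactly: the paper proves this theorem in one line by combining the preceding Proposition (via Lemma \ref{thm:lemma_inv} and Corollary \ref{thm:L_Lk}) with the observation that a quadratic spray has a linear associated connection, so the invariant Finsler metric is of Berwald type and Szab\'o's theorem applies, just as in Theorem \ref{prop:metr-proj-metr-1}. The invariance issue you flag at the end is a genuine subtlety that the paper silently glosses over---and it matters, since by the paper's own Heisenberg example plain Riemann metrizability is strictly weaker than invariant Riemann metrizability---but your repair is the right idea and can be simplified: in the averaging proof of Szab\'o's theorem the Riemannian metric is obtained canonically from the Finsler function (e.g.~by integrating the fundamental tensor over the indicatrix), so every isometry of the invariant Berwald metric, in particular every element of the $G$-action, is automatically an isometry of the resulting Riemannian metric, and no separate compatibility check between holonomy-invariance and ${\Ad}(H)$-invariance on $T_oM$ is required.
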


\noindent
\begin{remark}
  A different invariant metrizability concept of the $G/H$ structure is
  considered in \cite{DengHou} by S.~Deng and Z.~Hou where the $G/H$ structure
  is called invariant metrizable if there exists an invariant metric on it.
  The invariant metrizability (and projective metrizability) of a
  g.o.~structure or g.o.~spray is however more subtle, because in this case
  not only the $G/H$ homogeneous space, but also the geodesic structure is
  fixed and we want to metrize both.  It may happen that the g.o.~structure
  on a homogeneous space $G/H$ is not invariant metrizable but the
  $G/H$ structure it is. To illustrate this phenomenon, let us
  consider the following example.
\end{remark} 
\bigskip

\noindent
\textbf{Example.} Let us consider $M=\R^2$ as the homogeneous space
$\R^2=G/H$, where $G=ASO(2)$ is the euclidean transformation group, and $H=
SO(2)$ is the rotation subgroup. $H$ is the stabiliser of the point
$o\!=\!(0,0)\!\in\!  \R^2$. The matrix representation can be given by
\begin{displaymath}
  G=
  \left\{
    \left(
      \begin{smallmatrix} 
        \cos t & -\sin t & x_1
        \\
        \sin t & \cos t & x_2
        \\
        0 & 0 & 1
      \end{smallmatrix} 
    \right) 
  \right\}, 
  \qquad 
  H=
  \left\{
    \left(
      \begin{smallmatrix} 
        \cos t & -\sin t & 0
      \\
      \sin t & \cos t & 0
      \\
      0 & 0 & 1
    \end{smallmatrix} 
  \right) 
\right\},
\end{displaymath}
and the corresponding Lie algebras are
\begin{displaymath}
  \g\!=\! \left\{ \left(
      \begin{smallmatrix} 
        0 & -a_3 & a_1
        \\
        a_3 & 0 & a_2
        \\
        0 & 0 & 0
      \end{smallmatrix} 
    \right) 
  \right\},
  \qquad 
  \h\!=\! \left\{ \left(
      \begin{smallmatrix} 
        0 & -a_3 & 0
        \\
        a_3 & 0 & 0
        \\
        0 & 0 & 0
      \end{smallmatrix} 
    \right) 
  \right\}.
\end{displaymath}
We have the identification $G/H\simeq \R^2$, and the projection map is
given by 
\begin{equation}
  \label{eq:13}
  \pi:G\to G/H\simeq \R^2,
  \qquad\quad   \left(
    \begin{smallmatrix}
      \cos t & -\sin t & x_1
      \\
      \sin t & \ \cos t & x_2
      \\
      0 & 0 & 1
    \end{smallmatrix}
  \right) \stackrel{\pi}{\to} 
  \left(
    \begin{smallmatrix}
      x_1
      \\
      x_2
      \\
      1
    \end{smallmatrix}
  \right) \sim (x_1,x_2).
\end{equation}
Using the identification ``$\sim$'' introduced in \eqref{eq:13}, the action of
$G$ can be interpreted as a matrix multiplication: $\lambda_g \widehat x =g
\cdot \widehat x$. Since the homogeneous lift for this $G/H$ structure must be
a rotation-invariant map, homogeneous of degree $1$, it is of the form
\begin{equation}
  \label{eq:10}
  \sigma_\kappa\colon T_o\R^2\to \g  , \qquad  (v_1, v_2) 
  \stackrel{\sigma_\kappa}{\longrightarrow}
  \left(
    \begin{smallmatrix} 
      0 & -\kappa \sqrt{v_1^2+v_2^2} & v_1
      \\
      \kappa \sqrt{v_1^2+v_2^2} & 0 & v_2
      \\
      0 & 0 & 0
      \end{smallmatrix} 
    \right),
\end{equation}
where $\kappa \! \in \! \R$ is an arbitrary (but fixed) constant.  The
geodesic equation corresponding to the homogeneous lift \eqref{eq:10} can be
given as follows: a curve $t\to \gamma(t)$ is a geodesic if and only if
\begin{equation}
  \label{eq:12}
  \ddot \gamma = \sigma_\kappa(\dot \gamma ) \dot \gamma.
\end{equation}
Indeed, let $\gamma(t)$ be the geodesic in the direction $v\!\in\! T_oM$ and let
$X\!=\!\sigma_\kappa(v)$ be the geodesic vector corresponding to $v$.
Using the matrix representation we get
\begin{math}
  \gamma(t)\!=\!e^{t\,\sigma_\kappa(v)} o
\end{math}
and therefore for its second derivative we can find 
\begin{math}
  \ddot \gamma(t)\!=\!\sigma_\kappa(v) \sigma_\kappa(v)
  e^{t\,\sigma_\kappa(v)} o\!=\!  \sigma_\kappa(v) \dot \gamma (t).
\end{math}
Since $v\!=\!\dot \gamma(0)$ and the tangent vectors are invariant to
translations, we can obtain \eqref{eq:12}.  The system of differential equations
of geodesics is
\begin{equation}
  \label{eq:11}
  \left\{
    \begin{aligned}
      \ddot x_1 & = -\kappa \sqrt{\dot x_1^2+\dot x_2^2} \ \dot x_2,
      \\
      \ddot x_2 & = \hphantom{-}\kappa \sqrt{\dot x_1^2+\dot x_2^2} \ \dot
      x_1.
    \end{aligned}
  \right.
\end{equation}
If $\kappa=0$, then the curvature tensor is identically zero and the geodesics are
straight lines. The corresponding g.o. structure is metrizable: the usual
euclidean metric ($g_{ij}=\delta_{ij}$) metricises it.  If $\kappa\neq 0$,
then the geodesics are non-reversible (that is $\gamma_{p,-v}(t)\neq
\gamma_{p,v}(-t)$) and the curvature tensor is non-zero:
\begin{displaymath}
  R^{\kappa}_{(x,y)} \Big(\p{x_1}, \p{x_2}\Big) = -\kappa^2 y_2 \p{y_1}+\kappa^2 y_1 \p{y_2}.
\end{displaymath}
It is easy to show, using the metrizability criteria of \cite{BM_2013_2} or
\cite{MZ_EL}, that the corresponding g.o. structure is not metrizable.
Therefore it is not invariant metrizable as well.

This examples illustrates clearly the difference between the concept of the
metrizability of the $G/H$ structure and the metrizability of the
g.o. structure: the homogeneous space $\R^2\!=\! ASO(2)/SO(2)$ is invariant
metrizable (for example an invariant metric is given by the standard euclidean
metric), however the g.o.~structure determined by the homogeneous lift
\eqref{eq:10} is not, unless $\kappa=0$.

\subsection*{Acknowledgments} We appreciate all the comments from an
anonymous reviewer, especially those related to Corollary 3.5.  This work has been supported 
by the EU FET FP7 BIOMICS project, contract number CNECT-318202.

\vspace{0.5cm}

\noindent
Ioan Bucataru
\\
Faculty of Mathematics, Alexandru Ioan Cuza University
\\
700506, Bulevardul Carol I 11, Ia\c si, Romania
\\
{\it E-mail}: {\tt {}bucataru@uaic.ro}\vspace{4mm}
\\
\noindent
Tam\'as Milkovszki
\\
Institute of Mathematics, University of Debrecen,
\\
H-4032 Debrecen, Egyetem t\'er 1, Hungary
\\
{\it E-mail}: {\tt {}milkovszki@science.unideb.hu}\vspace{4mm}
\\
\noindent
Zolt\'an Muzsnay
\\
Institute of Mathematics, University of Debrecen,
\\
H-4032 Debrecen, Egyetem t\'er 1, Hungary
\\
{\it E-mail}: {\tt {}muzsnay@science.unideb.hu}\vspace{4mm}
\\

\end{document}